\definecolor{Red}{rgb}{0.7,0,0.1}
\definecolor{Green}{rgb}{0,0.6,0}
\numberwithin{equation}{section}
\newtheorem{thm}{Theorem}[section]
\newtheorem{lemma}{Lemma}[section]
\newtheorem{cor}{Corollary}[section]
\newtheorem{prop}{Proposition}[section]
\newtheorem{remark}{Remark}[section]
\title{A Delayed Yule Process}
\author{Radu Dascaliuc\thanks{Department of Mathematics,  Oregon State University, Corvallis, OR, 97331. {dascalir@math.oregonstate.edu}}
\and
Nicholas Michalowski
 \thanks{Department of Mathematics,  New
    Mexico State University, Las Cruces, NM,  88003.}
\and Enrique Thomann\thanks{Department of Mathematics,  Oregon State University, Corvallis, OR, 97331. }
\and
Edward C. Waymire\thanks{Department of Mathematics,  Oregon State University, Corvallis, OR, 97331. {waymire@math.oregonstate.edu}.}
}
\begin{document}

\maketitle

\begin{abstract}
In now classic work, David Kendall (1966) recognized that the Yule process and Poisson process could be related by a (random) time change.  Furthermore, he showed that the Yule population size
 rescaled by its mean has an almost sure
exponentially distributed limit as $t\to \infty$. 
In this note we introduce a class of coupled delayed Yule processes
parameterized by $0 < \alpha \le 1$ that includes the Poisson process
at $\alpha = {1/2}$.  Moreover we extend Kendall's limit theorem
to include a larger class of positive martingales derived from functionals that gauge the population genealogy.  A somewhat surprising connection 
with the Holley-Liggett smoothing transformation also emerges in this context.  Specifically, the latter is exploited to uniquely characterize the 
moment generating functions 
of distributions of the limit martingales, generalizing Kendall's mean one exponential limit.
\end{abstract}

\section{Introduction}  
The {\it basic Yule process} $Y = \{Y_t:t\ge 0\}$
 is a continuous time
 branching process starting from
a single progenitor in which a particle survives for a mean
one, exponentially distributed time before being replaced by two 
offspring independently evolving in the same manner.  $Y_t$
represents the size of the population of particles at time $t\ge 0$,
starting from $Y_0 = 1$.  
The {\it basic Poisson process} $N = \{N_t:t\ge 0\}$ is another
continuous time Markov process in which a particle survives for a mean one,
exponentially distributed time before being replaced by a single
particle that evolves in the same manner.  The shift 
$N_t+1$ represents the number of replacements that have occurred
by  time $t \ge 0$, $N_0 = 0$.  The multiplicative (geometric) growth
of the process $Y$ is in stark contrast to the additive growth of 
$N$.

Considerations of 
evolutionary processes, to be referred to as {\it delayed
Yule processes,} arise somewhat naturally in the probabilistic analysis of 
quasi-linear evolution equations such as incompressible Navier-Stokes
equations, and complex Burgers equation by probabilistic
 methods originating
with Le Jan and Sznitman \cite{YLJ_AS_1997}.  In particular, considerations of 
non-uniqueness and/or explosion problems in \cite{RD_NM_ET_EW2015}
for this framework prompted the present considerations.  However this
paper has a purely probabilistic focus and does not depend on such
motivations.   In fact, the probabilistic framework may also 
be of interest in the context of evolutionary biological processes. 

The 
principal
results are extensions of the aforementioned theorems
of Kendall (see \cite{DK1966}).  The 
connection between the Poisson and Yule process is
given by an exact coupling of the two processes through a binary
tree-indexed family of i.i.d. exponential random variables defined on
a probability space $(\Omega,{\cal F},P)$.  Precise criteria for the 
uniform integrability of positive martingales derived from
a family of gauges of the genealogy of the 
Yule process, including cardinality, is also given. The exact
 limit distribution
is identified for these uniformly integrable 
martingales as unique (mean one) fixed points of the Holley-Liggett
smoothing
operator \cite{RH_TL1981}, 
generalizing Kendall's mean one exponential limit 
in the case the
gauge is cardinality of the population. The latter is the 
Gamma distributed fixed point corresponding to the uniform (Beta)
smoothing factor.
 Finally, it is
shown that $\alpha = {1/2}$ is a critical transition value between 
bounded and unbounded infinitesimal generators defining the
$\alpha$-delayed Yule processes for $0<\alpha \le 1$.

\section{Delayed Yule Process}
To begin, consider  the modification of the Yule process given by
successively halving the previous branching
frequencies, i.e., doubling the mean holding time of particles  
of each generation. 
That is, let  $\{T_v: v\in{\bf T}=\cup_{k=0}^\infty\{1,2\}^k\}$,
with $\{1,2\}^0 = \{\theta\}$,  be a binary,
tree-indexed family  of i.i.d. mean one exponentially distributed random
variables rooted at a single progenitor $\theta$, and define
$$V^{(\frac{1}{2})}(t) = \Big\{v\in{\bf T}:\ \sum_{j=0}^{|v|-1}({1}/{2})^{-j}T_{v|j} \le t < \sum_{j=0}^{|v|}({1}/{2})^{-j}T_{v|j}\Big\},
\quad t\ge 0,$$
where $|\theta| = 0$, and $|v| = |<v_1,\dots,v_k>| = k$ denotes the
{\it height} of vertex $v\in{\bf T}$. Also $v|j = <v_1,\dots,v_j>$ is the restriction
of $v$ to generation $j\le k$.  Also, by convention, $\sum_{j=0}^{-1} = 0$.

Observe that
$$Y_t = \#V^{(1)}(t) = \Big\{v\in{\bf T}:\ \sum_{j=0}^{|v|-1}T_{v|j} \le t < \sum_{j=0}^{|v|}T_{v|j}\Big\},
\quad t\ge 0,$$
defines the basic Yule process; throughout $\#V$ will denote the cardinality of a set $V$.

Let $\tau_k, k = 1,2,\dots$ be the increasing
sequence of jump times of the {\it $\frac{1}{2}$-delayed Yule process}
defined by
$$N_t = \#V^{(\frac{1}{2})}(t)-1, t\ge 0.$$

\begin{lemma}[Key Coupling Lemma 1]\label{KCL1}
 For arbitrary $k\ge 1$,
 conditionally given $\tau_0 = 0,
 \tau_1,\dots, \tau_{k-1}$, $\tau_k-\tau_{k-1}$ is exponentially
 distributed with mean one. In particular,
 $\tau_k- \tau_{k-1}, k = 1,2,\dots$ is an i.i.d. sequence.
 \end{lemma}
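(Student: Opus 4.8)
The plan is to exploit the memoryless property of the exponential distribution together with a conservation law that pins the total branching rate at the constant value $1$. First I would describe the dynamics of the configuration: a vertex $v$ is present (``alive'') exactly on the interval $[\beta_v,\ \beta_v + (1/2)^{-|v|}T_v)$, where $\beta_v = \sum_{j=0}^{|v|-1}(1/2)^{-j}T_{v|j} = \sum_{j=0}^{|v|-1}2^{j}T_{v|j}$ is its birth time, and at the end of this interval $v$ is removed and replaced by its two children $v1, v2$ at height $|v|+1$. Hence $\#V^{(\frac{1}{2})}(t)$ increases by exactly one at each such replacement, so $N_t = \#V^{(\frac{1}{2})}(t)-1$ counts the branchings up to time $t$ and $\tau_k$ is the time of the $k$-th branching.

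The crux is the identity
$$\sum_{v\in V^{(\frac{1}{2})}(t)} 2^{-|v|} = 1 \qquad \text{for all } t\ge 0 .$$
I would prove this by induction on the branchings: at $t=0$ the configuration is $\{\theta\}$ and the sum equals $2^{0}=1$; a branching replaces a height-$g$ vertex (contributing $2^{-g}$) by two height-$(g+1)$ vertices (contributing $2\cdot 2^{-(g+1)} = 2^{-g}$), leaving the sum unchanged.

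Next I would invoke the memoryless property at the branching epochs. Record the configuration just after $\tau_{k-1}$: the live vertices $u_1,\dots,u_m$ (with $m = N_{\tau_{k-1}}+1 = k$) and their heights $g_i = |u_i|$. The full lifetime of a height-$g$ vertex is $(1/2)^{-g}T_v = 2^{g}T_v$, an exponential variable of rate $2^{-g}$. For the two vertices born at $\tau_{k-1}$ these are fresh independent exponentials; for each older vertex that has not yet branched, the memoryless property makes its residual lifetime again exponential of rate $2^{-g_i}$, independent of its age and of the past. The next branching time $\tau_k-\tau_{k-1}$ is the minimum of these $m$ independent residuals, hence exponential with total rate $\sum_{i=1}^m 2^{-g_i}$, which by the conservation law equals $1$. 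Thus, conditionally on the entire history up to $\tau_{k-1}$, $\tau_k-\tau_{k-1}$ is exponential with mean one.

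Finally, because this conditional law is exponential with mean one irrespective of the configuration (indeed irrespective of $k$), conditioning on the coarser data $\tau_0=0,\tau_1,\dots,\tau_{k-1}$ returns the same law, yielding the stated conditional assertion; and since each increment's conditional distribution given the past is exponential with mean one and does not depend on that past, the increments $\tau_k-\tau_{k-1}$ are i.i.d. The main obstacle is the bookkeeping in the memoryless step: one must justify, via the strong Markov property at the stopping time $\tau_{k-1}$, that the residual lifetimes of the older particles reset to fresh rate-$2^{-g_i}$ exponentials independent of the just-born particles' lifetimes. Once this is in place, the conservation law does the essential work by forcing the aggregate rate to be $1$ at every configuration.
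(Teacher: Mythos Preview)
Your proof is correct and lands on the same core observation as the paper: conditionally on the history up to $\tau_{k-1}$, the increment $\tau_k-\tau_{k-1}$ is the minimum of independent exponential residual lifetimes whose rates sum to $1$. The only real difference is how that rate identity is obtained. You prove the conservation law $\sum_{v\in V}2^{-|v|}=1$ directly (this is exactly what the paper isolates separately as Key Coupling Lemma~2) and then apply it globally at each $\tau_{k-1}$. The paper instead argues recursively: after $\tau_1$ the process splits into two independent copies, each a time-dilated version of the original with first-jump intensity $1/2$, and induction on this self-similar decomposition yields that the intensities always add to $1$. Your route is arguably more transparent and handles all $k$ at once; the paper's route is self-contained in that it does not need the combinatorial lemma in advance. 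Either way the memoryless/strong-Markov bookkeeping you flag is the only delicate point, and you have identified it correctly.
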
 
\begin{proof}
 First observe that $\tau_1=T_\theta$, and thus  $P(\tau_1 > t) = e^{- t}, t\ge 0$.
 Next $P(\tau_2-\tau_1 > t) = P(2T^{(1)}\wedge 2T^{(2)} > t)
 = e^{- {\frac{t}{2}}}e^{- {\frac{t}{2}}} = e^{- t}$.  More
 generally for $k\ge 2$, an induction argument shows that
 given $\tau_1,\dots,\tau_{k-1}$, 
 $\tau_k- \tau_{k-1}$ is the minimum of $k$ independent exponentially
 distributed random variables whose intensities add
 to one. To see this, for $k\ge 2$, on $[\tau_1 \le t]$,
 express the process $V^{(\frac{1}{2})}(t), t\ge  \tau_1,$
 as the disjoint union of two independent, sets of 
 vertices $V_{(j)}^{(\frac{1}{2})}(t-T_\theta), t > 0$, $j = 1,2$,
Then  $\tau_k-\tau_{k-1}$ is the minimum of the left and
right independent jump times.  In view of the scaling of
the holding times by a factor of $2$ in successive generations
in the definition of $V^{(\frac{1}{2})}$, it follows
by induction that this left-right minimum 
is the minimum of two independent
exponential holding times with intensity $\frac{1}{2}$,
respectively, and therefore exponential with unit intensity.
 \end{proof}

\begin{thm}\label{poissonhalf}
 The stochastic process 
$N_t = \#V^{(\frac{1}{2})}(t)-1, 
t\ge 0$, is a Poisson process with unit intensity.
\end{thm}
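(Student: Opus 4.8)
The plan is to identify $N$ as a renewal counting process and then invoke the classical equivalence between renewal processes with exponential inter-arrival times and the Poisson process. First I would record the initial value $N_0 = \#V^{(\frac{1}{2})}(0) - 1 = 0$, since at $t = 0$ only the progenitor is present, $V^{(\frac{1}{2})}(0) = \{\theta\}$, and observe that $N_t = \sum_{k \ge 1} \mathbf{1}\{\tau_k \le t\}$ counts the jump times $\tau_k$ introduced above. Lemma \ref{KCL1} already supplies the decisive input: with $\tau_0 = 0$, the increments $\tau_k - \tau_{k-1}$, $k \ge 1$, form an i.i.d. sequence of mean-one exponential random variables.

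The second step is to verify that $N$ is a genuine counting process with jumps of size exactly one. Each increase of $\#V^{(\frac{1}{2})}$ occurs when some vertex exhausts its rescaled holding time and is replaced by its two children; this turns one live vertex into two, so the cardinality increases by precisely one. Since the holding times are absolutely continuous, almost surely no two replacement events coincide, so every jump of $N$ has size one and the jump times are exactly $\tau_1 < \tau_2 < \cdots$. Moreover $\tau_k = \sum_{i=1}^{k} (\tau_i - \tau_{i-1}) \to \infty$ almost surely by the strong law of large numbers, so $N$ does not explode and $N_t < \infty$ for all $t$.

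With these facts in place, the conclusion is the standard characterization that a counting process whose successive inter-arrival times are i.i.d. exponential of intensity $\lambda$ is a Poisson process of rate $\lambda$. Concretely, $[N_t = n] = [\tau_n \le t < \tau_{n+1}]$, and since $\tau_n$ is a sum of $n$ i.i.d. unit exponentials, hence Gamma$(n,1)$ distributed, a short computation gives $P(N_t = n) = e^{-t} t^n/n!$; the lack-of-memory property of the exponential delivers the stationary, independent increments. Taking $\lambda = 1$ identifies $N$ as a unit-intensity Poisson process. I expect the only delicate point — and hence the main obstacle — to be the second step: one must argue cleanly that the genealogical bookkeeping of $V^{(\frac{1}{2})}$ produces unit jumps and that its jump epochs are exactly the renewal times $\tau_k$ of Lemma \ref{KCL1}, after which the renewal-to-Poisson theorem applies essentially verbatim.
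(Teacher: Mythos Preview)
Your proposal is correct and follows essentially the same route as the paper: both invoke Lemma~\ref{KCL1} to obtain i.i.d.\ mean-one exponential inter-jump times, note $N_0=0$, and then read off $P(N_t=n)=P(\tau_n\le t<\tau_{n+1})=e^{-t}t^n/n!$ together with stationary independent increments. The paper's version is terser, leaving the unit-jump and non-explosion verifications implicit, whereas you spell them out.
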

\begin{proof}
This is a direct consequence of the key coupling lemma1, making
$N$ a process with stationary independent increments, 
$N_0 = 0$, and
 $P(N_t = k) = P(\tau_k\le t < \tau_{k+1})
= \frac{t^k}{k!}e^{- t}, t\ge 0, k = 0,1,2,\dots$.
\end{proof}

Replacing $\frac{1}{2}$ by a parameter $\alpha\in (0,1]$
 in successive
generations of the basic Yule process defines the 
{\it $\alpha$-delayed Yule process}.  Namely,
$$V^{(\alpha)}(t) = \Big\{v\in{\bf T}: \sum_{j=0}^{|v|-1}\alpha^{-j}T_{v|j} \le t < \sum_{j=0}^{|v|}\alpha^{-j}T_{v|j}\Big\},
\quad t\ge 0.$$

Accordingly, $V^{(\alpha)}$ is a continuous time jump Markov process taking value in the (countable) space ${\cal E}$ of {\it evolutionary sets} defined
inductively by $V\in{\cal E}$ if and only if $V$ is a {\it finite} subset
of ${\bf T} = \cup_{n=0}^\infty\{1,2\}^n$,  such
that 
 $$ V = 
\left\{
\begin{array}{ll}
\{\theta\} & {\rm{if}}~~ \#V = 1, \\
W\backslash\{w\}\cup\{<w1>,<w2>\} & {\rm{for\  some}}~~~W\in{\cal E},\ \#W = \#V -1,\  w\in W,\ 
\mbox{else.}
\end{array}
\right.
$$

Although one may check that $V^{(\alpha)}$ is a 
Markov process on ${\cal E}$, the functional
 $\#V^{(\alpha)}$ is {\it not} generally Markov; exceptions
 being $\alpha = \frac{1}{2}, 1.$ 
  When $\alpha=1$, $\#V^{(\alpha)}$ is the classical 
 Yule process, and so  it is obviously Markov, while the 
 case $\alpha =\frac{1}{2}$
 is made special in a way already exploited in the proof of the Key Coupling Lemma \ref{KCL1}. 
The Markov property is a consequence of the following lemma that can be obtained by a simple induction argument 
 left to the reader.
 
 \begin{lemma}[Key Coupling Lemma 2]\label{KCL2}
 \label{onehalf}
  For any $V\in{\cal E}$ one has
 $$\sum_{v\in V}\left({1}/{2}\right)^{|v|}  = 1.$$
 \end{lemma}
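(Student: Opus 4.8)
The plan is to induct on the cardinality $\#V$, exploiting the recursive description of the class $\mathcal{E}$. The base case is immediate: the only evolutionary set with $\#V = 1$ is $V = \{\theta\}$, and since $|\theta| = 0$ one has $\sum_{v \in V}(1/2)^{|v|} = (1/2)^{0} = 1$, so the identity holds.

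For the inductive step I would assume the identity holds for every $W \in \mathcal{E}$ with $\#W = n$ and take an arbitrary $V \in \mathcal{E}$ with $\#V = n+1$. By the recursive definition of $\mathcal{E}$ there exist some $W \in \mathcal{E}$ with $\#W = n$ and a vertex $w \in W$ such that $V = W \setminus \{w\} \cup \{<w1>,<w2>\}$. Since the two new vertices satisfy $|<w1>| = |<w2>| = |w|+1$, passing from $W$ to $V$ deletes the single weight $(1/2)^{|w|}$ and inserts the two weights $(1/2)^{|w|+1}$. The whole argument then rests on the elementary identity $2(1/2)^{|w|+1} = (1/2)^{|w|}$, which says the net change in the weighted sum is exactly zero; hence $\sum_{v \in V}(1/2)^{|v|} = \sum_{v \in W}(1/2)^{|v|} = 1$ by the inductive hypothesis, closing the induction.

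I do not expect any genuine obstacle here: the proof is a single invariance check, and the only care needed is in confirming that each step of the recursion replaces exactly one leaf $w$ at height $|w|$ by precisely two children at height $|w|+1$, which is exactly what forces the cancellation $-(1/2)^{|w|} + 2(1/2)^{|w|+1} = 0$. It is worth flagging why this is the right quantity to track: $(1/2)^{|v|}$ is the intensity of the exponential clock attached to a leaf $v$ in $V^{(1/2)}$, since that vertex's holding time has mean $(1/2)^{-|v|}$. Thus the lemma states that in every reachable state the total branching intensity equals one. This is precisely the fact tacitly invoked in the proof of Lemma \ref{KCL1}, and it is the structural reason the increments $\tau_k - \tau_{k-1}$ are unit-rate exponentials and hence that $N_t$ is a unit-intensity Poisson process.
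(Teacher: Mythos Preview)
Your proposal is correct and is exactly the simple induction on $\#V$ that the paper indicates (the paper leaves the proof to the reader, merely noting it is ``a simple induction argument''). Your additional remark interpreting $(1/2)^{|v|}$ as the branching intensity at $v$ and linking the lemma to the unit-rate holding times in Lemma~\ref{KCL1} is apt and beyond what the paper spells out.
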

 
  In addition to cardinality, letting $\beta > 0$,
 the following functionals serve to gauge the genealogy
 of the evolution:
 \begin{equation}
 \label{gaugedef}
 a_\beta(V) = \sum_{v\in V}\beta^{|v|}, \quad V\in{\cal E}.
 \end{equation}
By the Key Coupling Lemma \ref{KCL2}, one has that
$a_{1/2}(V) = 1$ for all $V\in{\cal E}$.
The cardinality $\#V$
 is covered by $\beta = 1$, and the following provides a generalization
 of Kendall's classic limit theorem to other {\it gauges} of the 
 genealogical structure of the Yule process. 

 \begin{thm}\label{unif_int_thm}
For each $\beta\in (0,1]$,  $A_\beta(t) = e^{-(2\beta -1)t}a_\beta(V^{(1)}(t)),
t\ge 0$, is a positive martingale.  Moreover, $A_\beta$ is uniformly integrable
if and only if $\beta\in (\beta_c,1]$ where 
$\beta_c\approx 0.1866823$
 is the unique in $(0,1]$ solution to
 \begin{equation}\label{beta_c}
\beta_c\ln\beta_c = \beta_c-{\frac{1}{2}}.
\end{equation}
\end{thm}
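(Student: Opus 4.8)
The plan is to prove the martingale claim by an infinitesimal‑generator computation and then to reduce the uniform‑integrability dichotomy to the classical $L\log L$ (Biggins--Lyons) theory for additive martingales of branching random walks, which produces the transcendental equation \eqref{beta_c} as a ``velocity'' condition. For the martingale property I would first identify the generator $\mathcal L$ of the Markov process $V^{(1)}$ on ${\cal E}$: since for $\alpha=1$ every particle $v$ splits independently at rate $1$, replacing $\beta^{|v|}$ by $2\beta^{|v|+1}$, one gets
\[
\mathcal L a_\beta(V)=\sum_{v\in V}\bigl(2\beta^{|v|+1}-\beta^{|v|}\bigr)=(2\beta-1)\,a_\beta(V),
\]
so $a_\beta$ is an eigenfunction of $\mathcal L$ with eigenvalue $2\beta-1$. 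Dynkin's formula then makes $a_\beta(V^{(1)}(t))-(2\beta-1)\int_0^t a_\beta(V^{(1)}(s))\,ds$ a martingale, and an integration by parts upgrades this to the statement that $A_\beta(t)=e^{-(2\beta-1)t}a_\beta(V^{(1)}(t))$ is a martingale; positivity and $A_\beta(0)=1$ are immediate, and the true‑ (not merely local‑) martingale property follows from the finiteness $E[a_\beta(V^{(1)}(t))]=e^{(2\beta-1)t}$ given by the eigenrelation.

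Next I would set up the smoothing/branching structure. Decomposing at the first split time $T_\theta\sim\mathrm{Exp}(1)$ into two independent Yule subtrees whose heights are shifted by $1$, the almost‑sure limit $W=A_\beta(\infty)$ of the positive martingale satisfies the Holley--Liggett smoothing fixed‑point equation
\[
W\stackrel{d}{=}\beta\,e^{-(2\beta-1)T}\,(W_1+W_2),\qquad T\sim\mathrm{Exp}(1),
\]
with $W_1,W_2$ i.i.d.\ copies of $W$ independent of $T$. Equivalently, $A_\beta$ is (after the standard identification of the continuous‑time and generation‑indexed limits) the additive martingale at parameter $u=1$ of the binary branching random walk in which both offspring are displaced by $X=-\log\beta+(2\beta-1)T$. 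Writing $m(u)=E\bigl[\sum_{i=1}^2 e^{-uX_i}\bigr]=2\beta^{u}/\bigl(1+u(2\beta-1)\bigr)$ and $\Lambda(u)=\log m(u)$, I would invoke the Biggins--Lyons theorem: this additive martingale is uniformly integrable if and only if the velocity condition $u\Lambda'(u)<\Lambda(u)$ holds at $u=1$ and the $L\log L$ integrability condition on the one‑generation weight $\sum_{|v|=1}e^{-X_v}=2\beta e^{-(2\beta-1)T}$ is satisfied.

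To extract \eqref{beta_c}, note that $m(1)=1$, so $\Lambda(1)=0$ and
\[
\Lambda'(1)=\log\beta-\frac{2\beta-1}{2\beta}=\log\beta-1+\frac{1}{2\beta};
\]
hence the velocity condition $\Lambda'(1)<\Lambda(1)=0$ reads, after multiplying by $\beta>0$, exactly $\beta\ln\beta<\beta-\tfrac12$. Setting $g(\beta)=\beta\ln\beta-\beta+\tfrac12$, one has $g'(\beta)=\ln\beta<0$ on $(0,1)$, with $g(0^+)=\tfrac12>0$ and $g(1)=-\tfrac12<0$, so $g$ is strictly decreasing with a unique zero $\beta_c\in(0,1)$ solving \eqref{beta_c}, and the velocity condition holds precisely for $\beta\in(\beta_c,1]$. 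Because the branching is binary and $2\beta e^{-(2\beta-1)T}$ has moments of some order $p>1$ for every $\beta\in(0,1]$ (indeed $E[e^{-p(2\beta-1)T}]<\infty$ whenever $p(2\beta-1)>-1$, attainable with $p>1$), the $L\log L$ condition holds throughout $(0,1]$ and is non‑binding; combining the two, $A_\beta$ is uniformly integrable if and only if $\beta\in(\beta_c,1]$.

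The main obstacle is the sharp ``only if'' direction: showing that when $\Lambda'(1)\ge0$ the martingale collapses ($W=0$ a.s.) rather than merely failing some estimate. This is where the spine change‑of‑measure is essential — under the size‑biased measure the spine's cumulative displacement $S_{\xi_n}$ satisfies a law of large numbers with $S_{\xi_n}/n\to-\Lambda'(1)$, forcing the lone spine summand $e^{-S_{\xi_n}}$ to blow up when $\Lambda'(1)>0$ — and where one must also justify the passage between the real‑time martingale $A_\beta(t)$ and its generation‑indexed counterpart so that the Biggins--Lyons dichotomy applies verbatim.
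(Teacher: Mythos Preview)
Your derivation of $\beta_c$ via $m(u)=2\beta^{u}/(1+u(2\beta-1))$, $m(1)=1$, and the Biggins--Lyons velocity condition $\Lambda'(1)<0$ is correct and lands exactly on \eqref{beta_c}; the $L\log L$ check is also right (the one--generation weight $2\beta e^{-(2\beta-1)T}$ has a $p$th moment for some $p>1$ for every $\beta\in(0,1]$). So the approach is sound, but it is genuinely different from the paper's.

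\textbf{How the paper argues.} The martingale property is obtained not via Dynkin but by a bare--hands branching decomposition: first the recursion at $T_\theta$ gives $m_\beta(t)=e^{(2\beta-1)t}$, and then conditioning on $\mathcal F_s$ and splitting $V^{(1)}(t)$ into the subtrees rooted at $w\in V^{(1)}(s)$ yields $\EXP[A_\beta(t)\mid\mathcal F_s]=A_\beta(s)$ directly. For the ``if'' direction the paper does not invoke Biggins--Lyons; instead it proves an explicit $L^p$ bound for some $p\in(1,2]$ using the Chauvin--Neveu inequality $v_p(X_1+X_2)\le v_p(X_1)+v_p(X_2)$ applied to the two independent subtrees, which gives $\sup_t\EXP A_\beta^p(t)<\infty$ precisely when $(2\beta-1)p+1-2\beta^p>0$ for some $p\in(1,2]$, i.e.\ when $\beta>\beta_c$. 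For the ``only if'' with $\beta<\beta_c$ it uses the elementary bound $(x+y)^h\le x^h+y^h$ on the limiting recursion to force $2\beta^h/(1+(2\beta-1)h)\ge1$ for all $h\in(0,1)$ whenever $A_\beta(\infty)\not\equiv0$, which fails for $\beta<\beta_c$. The boundary $\beta=\beta_c$ is then handled by appealing to Holley--Liggett: a mean--one limit would be a mean--one fixed point of the smoothing map, and none exists at the critical parameter.

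\textbf{What each buys.} Your route is more conceptual and economical once one accepts the Biggins--Lyons package: the three regimes ($\beta>\beta_c$, $\beta=\beta_c$, $\beta<\beta_c$) fall out uniformly from the sign of $\Lambda'(1)$, with no separate treatment of the boundary. The paper's route is more self--contained and yields a quantitative byproduct (an explicit $L^p$ bound), and it avoids the one genuine wrinkle you flag yourself: identifying the continuous--time limit $\lim_t A_\beta(t)$ with the generation--indexed BRW limit $\lim_n W_n$. That identification is standard but not free; one clean way is to iterate the first--split recursion $n$ times to write $A_\beta(\infty)=\sum_{|v|=n}e^{-S_v}A_\beta^{(v)}(\infty)$ with $e^{-S_v}=\beta^{n}e^{-(2\beta-1)\tau_v}$, and then argue both limits are measurable with respect to the same tail $\sigma$--field and satisfy the same smoothing fixed--point equation. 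Note also that your generator argument applies $\mathcal L$ to $a_\beta$, which lies outside $C_0(\mathcal E)$ (the paper remarks on this); this is harmless after a routine localization, but the paper's direct conditioning sidesteps the domain issue entirely.
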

\begin{proof}
Let $m_\beta(t) = {\mathbb E}a_\beta(V^{(1)}(t)), t\ge 0$. 
First, let us check that 
\begin{equation}\label{Ea}
m_\beta(t) = e^{(2\beta -1)t},\quad
t\ge 0.
\end{equation} 
For this write
\begin{equation}
\label{basicrecursion}
a_\beta(V^{(1)}(t)) = 1[T_\theta > t] + 1[T_\theta\le t]\beta\{
a_\beta(V^{(1)+}(t-T_\theta)) + a_\beta(V^{(1)-}(t-T_\theta))\},
\end{equation}
where $V^{(1)\pm}(t-T_\theta)$ are conditionally independent 
copies of $V^{(1)}$ given $T_\theta$.  Taking expected values one has
$$m_\beta(t) = e^{-t} + 2\beta\int_0^t e^{-s}m_\beta(t-s)ds, 
\quad m_\beta(0) = 1.$$
The expression (\ref{Ea}) now follows.  

To establish the martingale property, let $0\le s < t$ and write
$$a_\beta(V^{(1)}(t)) = \sum_{w\in V^{(1)}(s)}\sum_{v\in V^{(1),w}(t-s)}
\beta^{|w|}\beta^{|v|},$$
where $V^{(1),w}$ are the delayed Yule processes rooted at  
$w\in V^{(1)}(s)$. 
Note that 
the respective processes $V^{(1),w}, \ w\in V^{(1)}(s),$ 
 are 
conditionally independent
given $V^{(1)}(s)$, and therefore
$${\mathbb E}[e^{-(2\beta -1)t}a_\beta(V^{(1)}(t)) | {\cal F}_s] = 
e^{-(2\beta -1)t}m_\beta(t-s)a_\beta(V^{(1)}(s))
= e^{-(2\beta-1)s}a_\beta(V^{(1)}(s)).$$
Thus $A_\beta$ is a positive martingale.  So,
by the martingale convergence theorem, it follows that
$$A_\beta(\infty) = \lim_{t\to\infty}e^{-(2\beta-a)t}a_\beta(V^{(1)}(t)),$$
exists almost surely.   Moreover, from (\ref{basicrecursion}) one has the distributional 
recursion
\begin{equation}
\label{distrecursion}
A_\beta(\infty) = \beta e^{-(2\beta-1)T_\theta}(A_\beta^+(\infty)
+ A_\beta^-(\infty)).
\end{equation}

Let us first investigate parameters 
$\beta\in (0,1]$ such that $A_\beta(\infty) = 0$ almost surely.
For this let $h\in (0,1)$ and observe that, 
since $(x+y)^h\le x^h+y^h$ and ${\mathbb E}(e^{-\delta T_{\theta}})
=1/(1+\delta)$, (\ref{distrecursion}) yields
$${\mathbb E}A_\beta^h(\infty) \le 2\beta^h\frac{1}{1+(2\beta-1)h}
{\mathbb E}A_\beta^h(\infty), \quad 0 < h < 1.$$
Thus, if $A_\beta(\infty) > 0$ with positive probability, then
\begin{equation}\label{beta_ineq}
\frac{2\beta^h}{1+(2\beta-1)h}\ge 1, \quad 0 < h < 1.
\end{equation}
By comparing the
 functions $\phi(h)=\beta^h$ and $\psi(h)=1+(2\beta-1)h$ on $h\in[0,1]$, 
it follows that (\ref{beta_ineq}) holds if and only if 
$$\beta\ge\beta_c,$$
where $\beta_c\approx 0.1866823$ is the unique solution on $(0,1]$ to
 the equation 
$
2\beta_c\ln\beta_c = (2\beta_c-1).
$
Then $\beta < \beta_c$ implies $A_\beta(\infty) = 0$ almost
surely.

For the converse, i.e., uniform integrability of 
the positive martingale $\{A_\beta(t): t\ge 0\}$, we will use
an inequality from \cite{JN1988}, attributed there
 to B. Chauvin and J. Neveu, especially suited for such
 problems.  For present purposes, if $1< p\le 2$, and
  $X_1,X_2\in L^p(\Omega, {\cal F}, P)$ are independent,
   positive random variables, then 
 \begin{equation}\label{C-N}
v_p(X_1+X_2) \le v_p(X_1) + v_p(X_2),
\end{equation}
 where 
   $v_p(X_j) = {\mathbb E}X_j^p - ({\mathbb E}X_j)^p, j=1,2$.

By the basic recursion (\ref{basicrecursion}), one has
\begin{equation}\label{EAp}
{\mathbb E}A_\beta^p(t) = e^{-[(2\beta-1)p+1]t}
+\beta^p\int_0^te^{-[(2\beta-1)p+1]s}{\mathbb E}(A_\beta^+(t-s)
+A_\beta^-(t-s))^p ds.
\end{equation}
Applying (\ref{C-N}) and using the submartingale property ${\mathbb E}A^p_\beta(t-s)
\le {\mathbb E}A^p_\beta(t), 0\le s\le t$ together with the fact that ${\mathbb E}A_\beta(t-s) = 1$,
we estimate
$$\begin{aligned}
{\mathbb E}(A_\beta^+(t-s)
+A_\beta^-(t-s))^p &=
v_p(A_\beta^+(t-s)+ A_\beta^-(t-s))+ ({\mathbb E}(A_\beta^+(t-s))+ A_\beta^-(t-s))^p\\
&\le v_p(A_\beta^+(t-s))+v_p(A_\beta^-(t-s))+2^p({\mathbb E}(A_\beta(t-s)))^p\\
&\le 2{\mathbb E}A^p_\beta(t-s)+2^p\le 2{\mathbb E}A^p_\beta(t)+2^p,
\end{aligned}$$
Thus, (\ref{EAp}) yields
$$
{\mathbb E}A_\beta^p(t)\le e^{-[(2\beta-1)p+1]t} + \frac{(2{\mathbb E}A^p_\beta(t)+2^p)\beta^p}{
(2\beta-1)p+1},
$$
which implies
$$\frac{(2\beta-1)p + 1 -2\beta^p}{(2\beta-1)p+1}{\mathbb E}A_\beta^p(t)
\le e^{-[(2\beta-1)p+1]t} + \frac{(2\beta)^p}{(2\beta-1)p+1},
\quad t\ge 0.$$
In particular, uniform integrability follows under the condition 
that for some $p\in (1,2]$,
$$(2\beta-1)p + 1 -2\beta^p > 0.$$
Equivalently, $\beta > \beta_c$ where, 
as before, $\beta_c$ -- the solution of (\ref{beta_c}).

 To complete the proof requires
consideration of the case $\beta = \beta_c$.  If, for sake of contradiction, one assumes 
uniform integrability then,  
as is elaborated in the proof of the Proposition \ref{H-L_prop} below,
 the distribution of $A_{\beta_c}(\infty)$
provides a mean one fixed point to the Holley-Liggett smoothing
map, see \cite{RH_TL1981}, where it is shown that there
is not  a mean one fixed point at $\beta_c$.
\end{proof}

For $\beta \in [0,1]$, define the moment generating function 
$$\varphi_\beta(r) = {\mathbb E}e^{-rA_\beta(\infty)}, \quad r\ge 0,$$
where $A_\beta(\infty) = \lim_{t\to\infty}A_\beta(t).$
Note that by Proposition \ref{unif_int_thm}, 
\[\varphi_\beta'(0)=0\quad \mbox{if}\ \beta<\beta_c \quad\mbox{and}\quad \varphi_\beta'(0)=-1\quad \mbox{if}\ \beta>\beta_c\]

Also define a probability measure $\nu_\beta$ on $S_\beta$
 where 
 $S_\beta = [0,\beta]$ for $\beta > 1/2$, and 
 $S_\beta = [\beta,\infty)$ for $0< \beta < 1/2$, and
 \begin{equation}\label{nu's}
 \nu_{\frac{1}{2}}(ds) = \delta_{\frac{1}{2}}(ds),\quad
 \nu_\beta(ds) = \frac{\left({s}/{\beta}\right)^\frac{1}{2\beta -1}}{|2\beta-1|}\frac{ds}{s},
 \ \beta\neq \frac{1}{2}.\end{equation}

\begin{prop}\label{H-L_prop} For $\beta > \beta_c$, $\varphi_\beta$ is uniquely 
determined within the class of probability distributions on 
$[0,\infty)$
whose moment generating function satisfies
\begin{equation}\label{phi-eq}
\varphi_\beta(r) = \int_{S_\beta}\varphi_\beta^2(rs)\nu_{\beta}(ds), 
 \quad r \ge 0,\end{equation}
 such that 
 $\varphi_\beta(0) = 1$, $\varphi_\beta^\prime(0) = -{\mathbb E}A_\beta(\infty)$.
Equivalently, $\varphi_\beta$ is uniquely determined by the
delayed differential equation
\begin{equation}\label{phi_ODE}
\varphi_\beta'(r)=\frac{1}{r}\frac{1}{2\beta-1}\,\varphi_\beta^2(\beta r)-\frac{1}{r}\frac{1}{2\beta-1}\,\varphi_\beta(r),\quad \beta\in[0,1]\setminus\Big\{\frac{1}{2}\Big\}, 
\end{equation}
and the given initial conditions.
\end{prop}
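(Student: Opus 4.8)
The plan is to read the integral (fixed--point) equation (\ref{phi-eq}) directly off the distributional recursion (\ref{distrecursion}), to recognize it as the fixed--point equation of the Holley--Liggett smoothing transform and invoke the uniqueness theorem of \cite{RH_TL1981}, and finally to pass between (\ref{phi-eq}) and the delayed differential equation (\ref{phi_ODE}) by elementary calculus.

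First I would condition on $T_\theta$ in (\ref{distrecursion}). Setting $W=\beta e^{-(2\beta-1)T_\theta}$ and using that $A_\beta^+(\infty)$ and $A_\beta^-(\infty)$ are independent copies of $A_\beta(\infty)$, independent of $T_\theta$, one gets
$$\varphi_\beta(r)={\mathbb E}\exp\!\left(-rW\big(A_\beta^+(\infty)+A_\beta^-(\infty)\big)\right)={\mathbb E}\,\varphi_\beta^2(rW)=\int_0^\infty e^{-u}\varphi_\beta^2\!\left(r\beta e^{-(2\beta-1)u}\right)du.$$
The substitution $s=\beta e^{-(2\beta-1)u}$ sends $u\in[0,\infty)$ onto $S_\beta$, and since $e^{-u}=(s/\beta)^{1/(2\beta-1)}$ and $du=-\tfrac{1}{2\beta-1}\tfrac{ds}{s}$, it carries $e^{-u}\,du$ to exactly the measure $\nu_\beta(ds)$ of (\ref{nu's}); equivalently $\nu_\beta$ is the law of the weight $W$, a probability measure on $S_\beta$ with $\int_{S_\beta}s\,\nu_\beta(ds)={\mathbb E}W=\tfrac12$. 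This produces (\ref{phi-eq}). The side conditions are then immediate: $\varphi_\beta(0)=1$ since $\varphi_\beta$ is the Laplace transform of a probability law, while differentiation under the expectation gives $\varphi_\beta'(0)=-{\mathbb E}A_\beta(\infty)$, the mean being finite (in fact equal to $1$) for $\beta>\beta_c$ by the uniform integrability established in Theorem \ref{unif_int_thm}.

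For uniqueness I would appeal to \cite{RH_TL1981}: equation (\ref{phi-eq}) is precisely the fixed--point equation of the binary smoothing transform with weight law $\nu_\beta$. The balance $2{\mathbb E}W=1$ computed above is the mean--preserving condition, and the associated moment function $2{\mathbb E}W^h=\tfrac{2\beta^h}{1+(2\beta-1)h}$ is exactly the quantity whose behaviour near $h=1$ is encoded in the critical equation (\ref{beta_c}). For $\beta>\beta_c$ the Holley--Liggett characterization yields a unique fixed point within the Laplace transforms of probability laws on $[0,\infty)$ having the prescribed first moment $-\varphi_\beta'(0)$; since $\varphi_\beta$ is such a fixed point by the previous step, it is the unique one.

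Finally, to establish the equivalence with (\ref{phi_ODE}) I would substitute $w=rs$ in (\ref{phi-eq}) to write, for $\beta>\tfrac12$,
$$(2\beta-1)\,(\beta r)^{1/(2\beta-1)}\varphi_\beta(r)=\int_0^{\beta r}w^{1/(2\beta-1)-1}\varphi_\beta^2(w)\,dw,$$
differentiate in $r$ by the fundamental theorem of calculus, and simplify to obtain $\varphi_\beta(r)+(2\beta-1)r\,\varphi_\beta'(r)=\varphi_\beta^2(\beta r)$, which rearranges to (\ref{phi_ODE}); for $\beta<\tfrac12$ the same computation applies with $[0,\beta r]$ replaced by $[\beta r,\infty)$ and the sign of $2\beta-1$ tracked throughout. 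Conversely, multiplying (\ref{phi_ODE}) by the integrating factor $r^{1/(2\beta-1)}$, integrating, and using $\varphi_\beta(0)=1$ to kill the boundary term at the origin recovers (\ref{phi-eq}), so the two formulations together with the initial conditions are equivalent. The hard part will be the uniqueness step: one must match the hypotheses of the Holley--Liggett theorem to the present weight law $\nu_\beta$ and verify that $\beta>\beta_c$ is exactly the regime in which a fixed point with the prescribed mean exists and is unique --- the threshold $\beta=\beta_c$ being precisely where that mean--one fixed point disappears, which is the fact borrowed in the proof of Theorem \ref{unif_int_thm}.
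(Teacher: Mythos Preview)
Your proposal is correct and follows essentially the same route as the paper: derive (\ref{phi-eq}) from (\ref{distrecursion}) by conditioning on $T_\theta$ and changing variables, invoke the Holley--Liggett uniqueness theorem, and then pass to (\ref{phi_ODE}) by calculus. The only cosmetic differences are that the paper verifies the Holley--Liggett hypothesis by directly computing ${\mathbb E}(W_\beta\ln W_\beta)=\ln(2\beta)-\tfrac{2\beta-1}{2\beta}$ with $W_\beta=2\beta e^{-(2\beta-1)T_\theta}$ (you phrase the same condition via the moment function $h\mapsto 2{\mathbb E}W^h$), and the paper obtains (\ref{phi_ODE}) by differentiating under the integral and integrating by parts rather than your substitution-plus-FTC argument; your sketch of the converse direction (ODE $\Rightarrow$ integral equation) is a small addition the paper omits.
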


\begin{proof}
First we will show that (\ref{phi-eq}) holds for $\beta\in[0,1]$. When $\beta=1/2$, by (\ref{distrecursion}),
\begin{equation}\label{phi_1/2}\varphi_{\frac{1}{2}}(r)=\varphi_\frac{1}{2}^2(r/2),\end{equation}
and thus (\ref{phi-eq}) holds with $\nu_{1/2}$ -- the Dirac measure as in (\ref{nu's}).
For $\beta\ne 1/2$, using the stochastic recursion (\ref{distrecursion}), we obtain:
\[
\begin{aligned}
\varphi_\beta(r) &= {\mathbb E }\big(e^{-rA_\beta(\infty)}\big)={\mathbb E}\left(
\exp\left[-r\beta e^{-(2\beta-1)T_\theta}\left(A_\beta^+(\infty))+A_\beta^-(\infty)\right)
\right]\right)\\
&= \int\limits_0^\infty e^{-t}\,{\mathbb E}\exp\left[-r\beta e^{-(2\beta-1)t}\left(A_\beta^+(\infty))+A_\beta^-(\infty)\right)
\right]\,dt\\
&= \int\limits_0^\infty e^{-t}\varphi_\beta^2\left(r\beta e^{-(2\beta-1)t}\right)\,dt.
\end{aligned}
\]
Now (\ref{phi-eq}) follows by the change of variables $s=\beta e^{-(2\beta-1)t}$.

For $\beta>\beta_c$, in view of the uniform integrality (see Theorem \ref{unif_int_thm}) one has ${\mathbb E}A_\beta(\infty) = 1$,  and we may use early results of \cite{RH_TL1981} on smoothing
transformations. Specifically, it is simple to check
 that for $\beta_c<\beta\le 1$, the random variable
$W_\beta = 2\beta e^{-(2\beta-1)T_\theta}$ has mean one (in fact, $\frac{1}{2}W_\beta$ is a re-scaling of the distribution $\nu_\beta$), while the recursion (\ref{distrecursion}) takes form
$$A_\beta(\infty) = W_\beta\Big(\frac{1}{2}A_\beta^+(\infty) + 
\frac{1}{2}A_\beta^-(\infty)\Big),$$
of a Holley-Liggett smoothing transformation within the framework of Theorem 7.1 in \cite{RH_TL1981}. 
Accordingly, the distribution of $A_\beta(\infty)$ is 
the unique positive mean one solution to the stochastic recursion provided
\[
{\mathbb E}(W_\beta \ln W_\beta)<\ln 2. 
\]
A direct calculation shows that ${\mathbb E}(W_\beta \ln W_\beta)=\ln(2\beta)-\frac{2\beta-1}{2\beta}$, and thus the inequality above is satisfied if and only if $\beta>\beta_c$.

To establish (\ref{phi_ODE}) we may use (\ref{phi-eq}), as follows (noting
that the implied 
differentiability is a property of a moment generating function of
a probability distribution on $[0,\infty)$):
\[
\varphi_\beta'(r)=\int\limits_{S_\beta}\frac{d}{dr}\varphi_\beta^2(rs)\nu_\beta(ds)=\frac{1}{r}\int\limits_{S_\beta}\frac{d}{ds}\varphi_\beta^2(rs)\,s\,\nu_\beta(ds).
\]
Now use (\ref{nu's}) and integrate by parts. In the case $\beta<1/2$ we get:
\[
\begin{aligned}
\varphi_{\beta}'(r) &=\frac{1}{r}\int\limits_\beta^{\infty}\frac{d}{ds}\varphi_{\beta}^2(rs)\, \frac{\left({s}/{\beta}\right)^\frac{1}{2\beta -1}}{1-2\beta}\,{ds}
=\frac{1}{r}\left. \varphi_{\beta}^2(rs)\, \frac{\left({s}/{\beta}\right)^\frac{1}{2\beta -1}}{1-2\beta}\right|_{s=\beta}^{\infty}+\frac{1}{r}\int\limits_\beta^{\infty}\varphi_{\beta}^2(rs)\, \frac{\left({s}/{\beta}\right)^\frac{1}{2\beta -1}}{(1-2\beta)^2}\,\frac{ds}{s}\\
&=-\frac{1}{r}\frac{1}{1-2\beta}\,\varphi_{\beta}^2(\beta r)+\frac{1}{r}\frac{1}{1-2\beta}\,\varphi_{\beta}(r),
\end{aligned}
\]
which implies (\ref{phi_ODE}) for $\beta\in[0,1/2)$. The case $\beta\in(1/2,1]$ is treated analogously.
\end{proof}

\begin{remark}
{\rm While the martingale limit is clearly a fixed point of the
Holley-Liggett smoothing transformation for any $\beta\in (0,1]$, 
the proof of 
uniform integrability appears to be essential to the identification of 
the critical parameter $\beta_c$ for a positive martingale
limit since fixed point uniqueness theorem is within the class
of mean one probability distributions on $[0,\infty)$. As noted
in \cite{RH_TL1981} for particular Beta distributions of $W$,
the fixed point distribution is a Gamma distribution.  This includes
the case of Kendall's theorem, \cite{DK1966}, for $\beta = 1$ in which $W$
is uniform on $(0,1)$ and the martingale limit has a mean-one
exponential distribution as given below.}
\end{remark}


\begin{cor}[Kendall's theorem]\label{Kendall_thm} $A_1(t) = e^{-t}\,Y_t, t\ge 0,$ is a
uniformly integrable martingale, and 
$A_1(\infty) = \lim_{t\to\infty}A_1(t)$ is exponentially distributed
with mean one. 
\end{cor}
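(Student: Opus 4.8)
The plan is to obtain both assertions as the $\beta = 1$ instance of the two preceding results. First I would observe that for $\beta = 1$ the gauge reduces to cardinality, $a_1(V^{(1)}(t)) = \sum_{v\in V^{(1)}(t)} 1^{|v|} = \#V^{(1)}(t) = Y_t$, while the exponent $2\beta - 1$ equals $1$; hence $A_1(t) = e^{-t}Y_t$ is precisely the martingale of Theorem \ref{unif_int_thm} at $\beta = 1$. Since $1 > \beta_c$, that theorem delivers at once that $A_1$ is a uniformly integrable positive martingale, so $A_1(\infty) = \lim_{t\to\infty}e^{-t}Y_t$ exists almost surely and in $L^1$. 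In particular, because $\mathbb{E}A_1(t) = e^{-t}m_1(t) = e^{-t}e^{t} = 1$ for every $t$ by (\ref{Ea}) with $\beta = 1$, uniform integrability gives $\mathbb{E}A_1(\infty) = 1$, and therefore $\varphi_1'(0) = -1$.

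It remains to identify the law of $A_1(\infty)$ through its moment generating function $\varphi_1(r) = \mathbb{E}e^{-rA_1(\infty)}$. Here I would invoke Proposition \ref{H-L_prop}, which applies since $\beta = 1 > \beta_c$: specializing the delay differential equation (\ref{phi_ODE}) to $\beta = 1$, so that $2\beta - 1 = 1$ and $\beta r = r$, collapses the delay and yields the autonomous separable equation
\[
r\,\varphi_1'(r) = \varphi_1^2(r) - \varphi_1(r), \qquad \varphi_1(0) = 1,\ \varphi_1'(0) = -1.
\]
By Proposition \ref{H-L_prop} this equation, together with the stated initial conditions, determines $\varphi_1$ uniquely among moment generating functions of probability distributions on $[0,\infty)$.

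Finally I would exhibit the solution. Separating variables and using the partial fraction $\tfrac{1}{\varphi(\varphi-1)} = \tfrac{1}{\varphi-1} - \tfrac{1}{\varphi}$ integrates to $(\varphi_1 - 1)/\varphi_1 = Kr$, i.e. $\varphi_1(r) = (1 - Kr)^{-1}$; matching $\varphi_1'(0) = -1$ forces $K = -1$, giving $\varphi_1(r) = 1/(1+r)$. Equivalently, one simply checks that $1/(1+r)$ satisfies the displayed equation and the initial conditions and appeals to uniqueness. Since $1/(1+r)$ is exactly $\mathbb{E}e^{-rX}$ for $X$ exponentially distributed with mean one, the uniqueness clause of Proposition \ref{H-L_prop} identifies $A_1(\infty)$ as a mean-one exponential random variable. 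I do not anticipate a genuine obstacle: the only points requiring care are verifying that $\beta = 1$ lies in the range $(\beta_c,1]$ where Proposition \ref{H-L_prop} applies, so that $\mathbb{E}A_1(\infty) = 1$ and the uniqueness statement is in force, and confirming that the delay in (\ref{phi_ODE}) degenerates at $\beta = 1$, both of which are immediate.
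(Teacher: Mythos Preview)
Your proof is correct and follows essentially the paper's approach: invoke Theorem \ref{unif_int_thm} at $\beta=1$ for uniform integrability, then use the uniqueness clause of Proposition \ref{H-L_prop} to pin down $\varphi_1$. The only minor difference is that the paper's main proof verifies directly that $1/(1+r)$ satisfies the integral equation (\ref{phi-eq}) at $\beta=1$ and appeals to uniqueness, whereas you specialize the delay ODE (\ref{phi_ODE}) and solve it explicitly; the paper itself records exactly your ODE route in the Remark immediately following the corollary, so the two arguments coincide.
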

\begin{proof}
It is easy to see that the mean one exponential moment generating function $1/(1+r)$ satisfies (\ref{phi-eq}) in case $\beta=1$. Now the 
fact that the exponential is indeed the distribution of $A_1(\infty)$ follows from the uniqueness statement of Proposition \ref{H-L_prop}.
\end{proof}


\begin{remark}{\rm
One can also 
obtain Kendall's result directly from (\ref{phi_ODE}). Indeed, when 
$\beta=1$ we have
\[(r\varphi_1(r))'=\varphi_1^2(r),\qquad\varphi_1(0)=1,\ \varphi_1'(0)=-1,\]
The non-zero solutions of the equation above can be obtained explicitly as
\[\varphi_1(r)=\frac{1}{1+c_0r},\]
while by the initial data, $c_0=1$, proving that the mean one exponential 
moment generating function is the only solution, and thus implying the aforementioned 
Kendall's theorem stated in  Corollary \ref{Kendall_thm}.
}
\end{remark}

\section{Infinitesimal Generator and another Critical Value for the Delayed Yule Process}
Give ${\cal E}$ the discrete topology and let $C_0({\cal E})$ 
denote the space of (continuous) real-valued functions 
$f:{\cal E}\to {\mathbb R}$ that vanish at infinity; i.e., given $\epsilon > 0$, 
one has $|f(V)| < \epsilon$ for all but finitely many $V\in{\cal E}$.
The subspace 
$C_{00}({\cal E})\subset C_0({\cal E})\subset L^{^{\infty}}({\cal E})$ of functions
with compact (finite) support is clearly dense in $C_0({\cal E})$
 for the uniform norm.

The construction at the 
outset of the coupled 
stochastic
processes
$V^{(\alpha)}, 0<\alpha \le 1$, provides corresponding semigroups of
positive linear contractions  $\{T_t^{(\alpha)}: t\ge 0\}$ 
defined by
$$T_tf(V) = {\mathbb E}_Vf(V^{(\alpha)}(t)), 
\quad t\ge 0, f\in C_0({\cal E}),$$
with the usual branching process convention that given
$V^{(\alpha)}(0) = V\in{\cal E}$, $V^{(\alpha)}(t)$ is the 
total progeny independently produced by single
progenitors at each $v\in V$. In fact, one may consider the
semigroup as defined
 on $L^{^{\infty}}({\cal E})\supset C_0({\cal E})$.

The usual considerations imply that the infinitesimal generator
$(L^{(\alpha)},{\cal D}_\alpha)$ of $V^{(\alpha)}$
is given on  
$C_{00}({\cal E})$ via
$$L^{(\alpha)} f(V) = \sum_{v\in V}\alpha^{|v|}\{f(V^v)
- f(V)\}, \quad f\in C_{00}({\cal E}),$$
where
$$V^v = V\backslash\{v\}\cup\{<v1,v2>\}, \quad v\in V.$$
One may naturally pursue the computation of a core for
$L^{(\alpha)}$, however for the present purposes the above is sufficient
to establish the following distinct role of $\alpha = \frac{1}{2}$
as a critical parameter.

\begin{prop} $(L^{(\alpha)},{\cal D}_\alpha)$, ${\cal D}_\alpha\subset L^{^{\infty}}({\cal E})$ -- the domain of $L^{(\alpha)}$, is a bounded linear operator
if and only if $\alpha\le \frac{1}{2}$.
\end{prop}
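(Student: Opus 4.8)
The plan is to compute the operator norm of $L^{(\alpha)}$ explicitly and show it equals $2\sup_{V\in\mathcal{E}}a_\alpha(V)$, with $a_\alpha$ the gauge in \eqref{gaugedef}; this reduces the whole question to whether that supremum is finite. Note first that, because each $V\in\mathcal{E}$ is a \emph{finite} set, $L^{(\alpha)}f(V)=\sum_{v\in V}\alpha^{|v|}\{f(V^v)-f(V)\}$ is a well-defined finite sum for every $f\in L^\infty(\mathcal{E})$ and every $V$, so the operator acts on all of $L^\infty(\mathcal{E})$ and boundedness is precisely the question of whether $\sup_V|L^{(\alpha)}f(V)|\le C\|f\|_\infty$ holds uniformly.

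First I would establish the upper bound: for $f\in L^\infty(\mathcal{E})$ with $\|f\|_\infty\le 1$ and any $V\in\mathcal{E}$,
\[
|L^{(\alpha)}f(V)|\le\sum_{v\in V}\alpha^{|v|}\,|f(V^v)-f(V)|\le 2\,a_\alpha(V),
\]
so that $\|L^{(\alpha)}\|\le 2\sup_V a_\alpha(V)$. For the matching lower bound, fix $V_0\in\mathcal{E}$ and observe that the finitely many sets $V_0$ and $\{V_0^v:v\in V_0\}$ are pairwise distinct: each $V_0^v$ has cardinality $\#V_0+1$, and for $v\ne v'$ one has $v'\in V_0^v$ but $v'\notin V_0^{v'}$. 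Hence there is an $f\in C_{00}(\mathcal{E})\subset L^\infty(\mathcal{E})$ with $\|f\|_\infty=1$, $f(V_0)=-1$, and $f(V_0^v)=1$ for all $v\in V_0$; for this $f$, $L^{(\alpha)}f(V_0)=2a_\alpha(V_0)$, giving $\|L^{(\alpha)}\|\ge 2a_\alpha(V_0)$. Taking the supremum over $V_0$ yields $\|L^{(\alpha)}\|=2\sup_V a_\alpha(V)$.

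It then remains to evaluate $\sup_V a_\alpha(V)$, and the key computation is that splitting a vertex $w\in V$ changes the gauge by
\[
a_\alpha(V^w)-a_\alpha(V)=2\alpha^{|w|+1}-\alpha^{|w|}=\alpha^{|w|}(2\alpha-1).
\]
For $\alpha\le\tfrac12$ this increment is nonpositive; since every $V\in\mathcal{E}$ is reached from $\{\theta\}$ by finitely many such splits (the inductive definition of $\mathcal{E}$), it follows that $a_\alpha(V)\le a_\alpha(\{\theta\})=1$, whence $\sup_V a_\alpha(V)=1$ and $L^{(\alpha)}$ is bounded (the case $\alpha=\tfrac12$ also follows directly from Lemma \ref{KCL2}). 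For $\alpha>\tfrac12$, taking $V_n$ to be the complete binary tree consisting of all $2^n$ vertices at height $n$ gives $a_\alpha(V_n)=(2\alpha)^n\to\infty$, so $\sup_V a_\alpha(V)=\infty$ and $L^{(\alpha)}$ is unbounded.

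The only points requiring care — and hence the main (mild) obstacle — are the lower-bound test-function construction, where one must verify that the relevant evolutionary sets are genuinely distinct so a norm-one, finitely supported $f$ realizing the prescribed sign pattern exists, and the reachability/monotonicity argument certifying $\sup_V a_\alpha(V)=1$ over \emph{all} of $\mathcal{E}$ rather than merely on special configurations such as the complete trees. Neither step is difficult, and together they pin down $\|L^{(\alpha)}\|=2\sup_V a_\alpha(V)$, finite exactly when $\alpha\le\tfrac12$.
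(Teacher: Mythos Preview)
Your argument is correct. The sufficiency direction ($\alpha\le\tfrac12$) is essentially the paper's: both bound $|L^{(\alpha)}f(V)|\le 2a_\alpha(V)$ and then use $a_\alpha(V)\le a_{1/2}(V)=1$ (the paper invokes Lemma~\ref{KCL2} directly, while you derive the same bound via the splitting increment $a_\alpha(V^w)-a_\alpha(V)=\alpha^{|w|}(2\alpha-1)\le 0$; these are equivalent).

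For the necessity direction ($\alpha>\tfrac12$) your route is a genuine, if modest, variation. The paper builds a sequence $f_n(V)=h(V)\,\mathbf{1}[h(V)\le n]$ using the maximal height $h$, evaluates at the full binary set of height $n$, and shows $|L^{(\alpha)}f_n(V)|/\|f_n\|_\infty\to\infty$. You instead fix $V_0$, verify that $V_0$ and the $V_0^v$ are pairwise distinct, and take a finitely supported $f$ with the prescribed sign pattern, which yields the sharp equality $\|L^{(\alpha)}\|=2\sup_{V}a_\alpha(V)$ rather than merely unboundedness. Your approach buys a bit more: an exact formula for the operator norm and a cleaner test function living in $C_{00}({\cal E})$; the paper's choice has the advantage of using a single ``natural'' family $f_n$ without needing the distinctness check. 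Either way, the same complete binary sets $V_n$ with $a_\alpha(V_n)=(2\alpha)^n$ drive the blow-up.
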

\begin{proof}
The sufficiency follows from the key coupling lemma 2, 
since for $\alpha\le \frac{1}{2}$ one has the bound $\sum_{v\in V}\alpha^{|v|} \le \sum_{v\in V}2^{-|v|} = 1,
V\in {\cal E}.$  In particular, for $f\in C_0({\cal E})$,
$$|L^{(\alpha)} f(V)| \le 2\sup_{W\in{\cal E}}|f(W)|, \quad V\in{\cal E}.$$
On the other hand, for $\alpha > \frac{1}{2}$, define a sequence
of functions $f_n\in C_00({\cal E})$ by
$$f_n(V) = h(V)\,{\bf 1}_{[h(V)\le n]}, \quad n = 1,2,\dots,$$
where $h(V) = \max\{|v|: v\in V\}, V\in{\cal E}$.  Then for 
full binary branching $h(V) = n, |V| = 2^n$. Thus
$\|f_n\|_{\infty} = n$, and for such $V$, 
 $$|L^{(\alpha)}f_n(V)| = \sum_{v\in V}\alpha^n = (2\alpha)^n.$$
 In particular 
 $$\frac{|L^{(\alpha)}f_n(V)|}{\|f_n\|_{\infty}} = \frac{(2\alpha)^n}{n}
 \to \infty\quad \mbox{as}\ n\to\infty\quad \mbox{for}\ \alpha >\frac{1}{2}.$$
\end{proof}

\begin{remark}
Although $a_\beta\notin C_0({\cal E})$ for any $\beta\in (0,1]$,
the following formal calculation
for $\alpha\in(0,1]$,
$$L^{(\alpha)} a_\beta(V) = (2\beta -1)a_{\alpha\beta}(V), \quad V\in{\cal E},
$$
is intriguing from the perspective of precise identification of the generator.
In particular, $a_\beta$ is formally a positive eigenfunction of $L^{(1)}$
with non-positive 
eigenvalue $2\beta-1 < 0$ for $\beta < \frac{1}{2}$ as required
for a contraction semigroup of positive linear operators.  To make this formal
calculation rigorous obviously requires a modification of the function space beyond
the standard choice $C_0({\cal E})$.
\end{remark}

Finally let us conclude by noting a closely related evolution
that takes place in sequence space that may be of interest
in other contexts.
  For $V\in{\cal E}$,  let 
$$g_k(V) = \#\{v\in V: |v| = k\}, \quad k = 0,1,2,\dots .$$
Also define an equivalence relation on ${\cal E}$ by 
$V\sim W$, \  $V,W\in{\cal E}$, if and only if $g_k(V) = g_k(W)$ for all $k$.
Then the space of equivalence classes ${\cal E}/\sim$ is in one-to-one correspondence with a subset of the sequence space
 $c_{00}({\mathbb Z}_{+})\subset \ell_1({\mathbb Z}_+)$ defined inductively as follows: 
 $n = (n_0,n_1,\dots)\in c_{00}({\mathbb Z}_{+})$ belongs to the
 space ${\cal E}_0$ of  {\it evolutionary sequences}
 if either $n = (1,0,\dots)$ or, otherwise, there is an 
 $m\in{\cal E}_0\subset
 c_{00}({\mathbb Z}_+)$
 such that $m = n^{(k)} := (n_0,n_1,\dots,n_{k}-1,n_{k+1}+2,n_{k+2},\dots)$ for
 some $k\ge 0$ such that $n_k\ge 1$.  Note that $\sum_{j=0}^\infty n_j
 = \sum_{j=0}^\infty m_j -1$.  
 For $0<\alpha \le 1$,  the equivalence relation induces
  $N^{(\alpha)} =\{N^{(\alpha)}(t): t\ge 0\}$
 as the continuous time jump Markov process on ${\cal E}_0$
 with
 generator given for $f\in C_{00}({\cal E}_0)$ by
 $$\tilde{L}^{(\alpha)}f(n) = \sum_{k=0}^\infty n_k\alpha^k(f(n^{(k)}) - f(n)),
 \quad n\in{\cal E}_0.$$
 
 \section{Acknowledgments}    This work was partially supported by
grants DMS-1408947, DMS-1408939, DMS-1211413, and DMS-1516487
from the National Science Foundation.

 \bibliographystyle{plain}
\bibliography{DY_bib}

 \end{document}